\newcommand*{\mailto}[1]{\href{mailto:#1}{\nolinkurl{#1}}}
\newcommand{\arxiv}[1]{\href{http://arxiv.org/abs/#1}{arXiv:#1}}
\def\theequation{\@arabic\c@equation}
\newcommand{\bbN}{{\mathbb{N}}}
\newcommand{\bbR}{{\mathbb{R}}}
\newcommand{\bbC}{{\mathbb{C}}}
\newcommand{\cB}{{\mathcal B}}
\newcommand{\cC}{{\mathcal C}}
\newcommand{\cH}{{\mathcal H}}
\newcommand{\cI}{{\mathcal I}}
\newcommand{\no}{\nonumber}
\newcommand{\lb}{\label}
\newcommand{\f}{\frac}
\newcommand{\Oh}{O}
\newcommand{\oh}{o}
\newcommand{\tr}{\text{\rm{tr}}}
\newcommand{\ran}{\text{\rm{ran}}}
\newcommand{\dom}{\text{\rm{dom}}}
\newcommand{\slim}{\text{\rm{s-lim}}}
\newcommand{\bi}{\bibitem}
\renewcommand{\Im}{\text{\rm Im}}
\renewcommand{\ln}{\text{\rm ln}}
\renewcommand{\dot}{\overset{\textbf{\Large.}}}
\numberwithin{equation}{section}
\newtheorem{theorem}{Theorem}[section]
\newtheorem{lemma}[theorem]{Lemma}
\newtheorem{hypothesis}[theorem]{Hypothesis}
\newtheorem{example}[theorem]{Example}
\theoremstyle{remark}
\newtheorem{remark}[theorem]{Remark}
\begin{document}

\title[Traces and Modified Fredholm Determinants]{On Traces and Modified Fredholm
Determinants for Half-Line Schr\"odinger Operators \\ with Purely Discrete Spectra}

\author[F.\ Gesztesy]{Fritz Gesztesy}
\address{Department of Mathematics,
Baylor University, One Bear Place \#97328,
Waco, TX 76798-7328, USA}
\email{\mailto{Fritz\_Gesztesy@baylor.edu}}
%\email{Fritz$\_$Gesztesy@baylor.edu}
\urladdr{\url{http://www.baylor.edu/math/index.php?id=935340}}
%\urladdr{http://www.baylor.edu/math/index.php?id=935340}

\author[K.\ Kirsten]{Klaus Kirsten}
\address{GCAP-CASPER, Department of Mathematics,
Baylor University, One Bear Place \#97328,
Waco, TX 76798-7328, USA}
\email{\mailto{Klaus\_Kirsten@baylor.edu}}
%\email{Klaus$\_$Kirsten@baylor.edu}
\urladdr{\url{http://www.baylor.edu/math/index.php?id=54012}}
%\urladdr{http://www.baylor.edu/math/index.php?id=54012}

%%%%%%%%%%%%%%%%%%%%%%%%%%%%%%%%%%%%%%%%%
%\dedicatory{}
\date{\today}
%\date{, 2003.}
\thanks{K.K.\ was supported by the Baylor University Summer Sabbatical Program.}
\thanks{To appear in {\it The Quarterly of Applied Mathematics.}}
\subjclass[2010]{Primary: 47A10, 47B10, 47G10. Secondary: 34B27, 34L40.}
\keywords{Traces, (modified) Fredholm determinants, semi-separable integral kernels,
Sturm--Liouville operators, discrete spectrum.}

%%%%%%%%%%%%%%%%%%%%%%%%%%%%%%%%%%%%%%%%%
\begin{abstract}
After recalling a fundamental identity relating traces and modified Fredholm determinants, we apply it to a class of half-line Schr\"odinger operators $(- d^2/dx^2) + q$ on $(0,\infty)$ with purely discrete spectra.  Roughly speaking, the class considered is generated by potentials $q$ that, for some fixed $C_0 > 0$,
$\varepsilon > 0$, $x_0 \in (0, \infty)$, diverge at infinity in the manner that $q(x) \geq C_0 x^{(2/3) + \varepsilon_0}$ for all $x \geq x_0$. We treat all self-adjoint boundary conditions at the left endpoint $0$.
\end{abstract}
%%%%%%%%%%%%%%%%%%%%%%%%%%%%%%%%%%%%%%%%%

\maketitle

%\newpage

%{\scriptsize{\tableofcontents}}
%\normalsize

%%%%%%%%%%%%%%%%%%%%%%%%%%%%%%%%%%%%%%%%%
%%%%%%%%%%%%%%%%%%%%%%%%%%%%%%%%%%%%%%%%%
\section{Introduction} \lb{s1}
%%%%%%%%%%%%%%%%%%%%%%%%%%%%%%%%%%%%%%%%%
%%%%%%%%%%%%%%%%%%%%%%%%%%%%%%%%%%%%%%%%%

To set the stage for describing the principal purpose of this note, we assume that $q$ satisfies
$q \in L^1_{loc} (\bbR_+; dx)$, $q$ real-valued a.e.\ on $\bbR_+$, and that for some $ \varepsilon_0 > 0$,
$C_0 >0$,  and sufficiently large $x_0 > 0$,
\begin{equation}
q(x) \geq C_0 \, x^{(2/3) + \varepsilon_0}, \quad x \in (x_0,\infty).      \lb{1.1}
\end{equation}
Next, we introduce the half-line Schr\"odinger operator $H_{+,\alpha}$ in $L^2(\bbR_+; dx)$ as the
$L^2$-realization of the differential expression $\tau_+$ of the type
\begin{equation}
\tau_+ = - \f{d^2}{dx^2} + q(x) \, \text{ for a.e.~$x \in \bbR_+$}     \lb{1.2}
\end{equation}
(here $\bbR_+ = (0,\infty)$), and a self-adjoint boundary condition of the form
\begin{equation}
\sin(\alpha) g'(0) + \cos(\alpha) g(0) = 0, \quad \alpha \in [0,\pi)    \lb{1.3}
\end{equation}
for $g$ in the domain of $H_{+,\alpha}$. 
Then under appropriate additional technical assumptions on $q$ (cf.\ Hypothesis \ref{h3.1}), we will prove in Theorem \ref{t3.3} that
\begin{align}
& \tr_{L^2(\bbR_+;dx)} \big((H_{+,\alpha} - z I_+)^{-1} - (H_{+,\alpha} - z_0 I_+)^{-1}\big)    \no \\
& \quad = - \f{d}{dz} \ln\big(\det{_{2,L^2(\bbR_+;dx)}} \big(I_+ - (z - z_0) (H_{+,\alpha } - z_0 I_+)^{-1}\big)\big)
\no \\
& \quad = \f{d}{dz} \ln\big(\sin (\alpha) f_{+,1} ' (z,0,x_0)+\cos (\alpha )  f_{+,1}(z,0,x_0)\big)\bigg|_{z=z_0} \no\\
& \qquad - \f{d}{dz} \ln\big(\sin (\alpha ) f_{+,1}' (z,0,x_0) + \cos (\alpha ) f_{+,1}(z,0,x_0)\big)    \no \\
& \qquad +\f{1}{2}  \cI (z,z_0,x_0),    \lb{1.4}
\end{align}
(with $I_+$ abbreviating the identity operator in $L^2(\bbR_+; dx)$) and
\begin{align}
& \det{_{2,L^2(\bbR_+;dx)}} \big(I_+ - (z - z_0) (H_{+,\alpha } - z_0 I_+)^{-1}\big)    \no \\
& \quad = \bigg[\f{ \sin (\alpha ) f_{+,1} ' (z,0,x_0) + \cos (\alpha ) f_{+,1} (z,0,x_0) }{ \sin (\alpha ) f_{+,1} ' (z_0,0,x_0) + \cos (\alpha ) f_{+,1} (z_0,0,x_0)}\bigg]   \no \\
& \qquad \times \exp{\bigg( - (z - z_0) \,\,\f{ \sin (\alpha ) \dot{f^{\, \prime}}_{\!\!\!\!+,1} (z_0,0,x_0)+\cos (\alpha ) \dot{f}_{+,1} (z_0,0,x_0) }
{ \sin (\alpha ) f_{+,1}'(z_0,0,x_0)+\cos (\alpha ) f_{+,1} (z_0, 0,x_0) } \bigg)}    \lb{1.5}  \\
& \qquad \times \exp\bigg(- \f{1}{2} \int_{z_0}^z d \zeta \, \cI(\zeta,z_0,x_0)\bigg).    \no
\end{align}
Here we abbreviated $\prime = d/dx$, $\dot{} = d/dz$, 
\begin{equation}
 \cI(z,z_0,x_0) = \int_{x_0}^{\infty} dx \big\{[q(x) - z]^{-1/2} - [q(x) - z_0]^{-1/2}\big\},   \lb{1.6}
\end{equation}
and $f_{+,1}(z,x,x_0)$ represents an analog of the Jost solution in the case where $q$ denotes a short-range potential (i.e., one that decays sufficiently fast as $x \to \infty$). Finally, $\det_2(\, \cdot \,)$ abbreviates the modified Fredholm determinant naturally associated with Hilbert--Schmidt operators. 

Following the recent paper by Menon \cite{Me16}, which motivated us to write the present note, we then revisit the exactly solvable  example $q(x) = x$, $x \in \bbR_+$, in Example \ref{e3.4}, 

In our final result, Theorem \ref{t3.5}, we will also treat the case of different boundary condition
parameters $\alpha_j \in [0,\pi)$, $j=1,2$, and derive the following extension of \eqref{1.4},
\begin{align}
& {\tr}_{L^2(\bbR_+; dx)} \big((H_{+,\alpha_2} - z I_+)^{-1}
- (H_{+,\alpha_1} - z_0 I_+)^{-1}\big)    \no \\
& \quad = - \f{d}{dz} \ln\bigg(\f{\sin(\alpha_2) f_{+,1}'(z,0,x_0)
+ \cos(\alpha_2) f_{+,1}(z ,0,x_0)}{\sin(\alpha_1) f_{+,1}'(z_0 ,0,x_0)
+ \cos(\alpha_1) f_{+,1}(z_0 ,0,x_0)}\bigg),    \lb{1.7} \\
& \qquad +\f{1}{2}  \cI(z,z_0,x_0).       \no
\end{align}

Our proofs of \eqref{1.4}, \eqref{1.5}, and \eqref{1.6} in Section \ref{s3} are based on fundamental  connections between traces and modified Fredholm determinants briefly discussed in Section \ref{s2}, in particular, we will employ the relation (with $I_{\cH}$ the identity operator in $\cH$) 
\begin{align}
\begin{split}
& \tr_{\cH}\big((A - z I_{\cH})^{-1} - (A - z_0 I_{\cH})^{-1}\big)    \\
& \quad = - (d/dz) \ln\big({\det}_{\cH, 2}\big(I_{\cH} - (z - z_0) (A - z_0 I_{\cH})^{-1}\big)\big),    \lb{1.8}
\end{split}
\end{align}
where $A$ denotes a densely defined and closed operator in $\cH$ with $\rho(A) \neq \emptyset$, and
$(A - z I_{\cH})^{-1} \in \cB_2(\cH)$, $z \in \rho(A)$.

Finally, we briefly summarize some of the basic notation used in this paper.
Let $\cH$ be a separable, complex Hilbert space, $(\,\cdot\,,\,\cdot\,)_{\cH}$ the scalar product
in $\cH$ (linear in the second factor), and
$I_{\cH}$ the identity operator in $\cH$. The domain and range of an operator $T$ are
denoted by $\dom(T)$ and $\ran(T)$, respectively.
% The closure of a closable operator $S$ is denoted by $\ol S$.
The kernel (null space) of $T$
is denoted by $\ker(T)$. The spectrum, point spectrum, and resolvent set of a closed linear
operator in $\cH$ will be denoted by $\sigma(\cdot)$, $\sigma_p(\cdot)$, and $\rho(\cdot)$; the
discrete spectrum of $T$ (i.e., points in $\sigma_p(T)$ which are isolated from the rest of
$\sigma(T)$, and which are eigenvalues of $T$ of finite algebraic multiplicity) is
abbreviated by $\sigma_d(T)$. The {\it algebraic multiplicity} $m_a(z_0; T)$ of an eigenvalue
$z_0\in\sigma_d(T)$ is the dimension of the range of the corresponding {\it Riesz projection}
$P(z_0;T)$,
\begin{equation}
m_a(z_0; T) = \dim(\ran(P(z_0;T))) = \tr_{\cH}(P(z_0;T)),
\end{equation}
where (with the symbol $ \ointctrclockwise$ denoting counterclockwise oriented
contour integrals)
\begin{equation}
 P(z_0;T)=\f{-1}{2\pi i} \ointctrclockwise_{C(z_0;\varepsilon)} d\zeta \,
(T - \zeta I_{\cH})^{-1},
\end{equation}
for $0 < \varepsilon<\varepsilon_0$ and  $D(z_0;\varepsilon_0) \backslash \{z_0\}\subset \rho(T)$; here
$D(z_0; r_0) \subset \bbC$ is the open disk with center $z_0$ and radius
$r_0 > 0$, and $C(z_0; r_0) = \partial D(z_0; r_0)$ the corresponding circle.

The Banach spaces of bounded
and compact linear operators in $\cH$ are denoted by $\cB(\cH)$ and
$\cB_\infty(\cH)$, respectively. Similarly, the Schatten--von Neumann
(trace) ideals will subsequently be denoted by $\cB_p(\cH)$,
$p \in [1,\infty)$. In addition,
$\tr_{\cH}(T)$ denotes the trace of a trace class operator $T\in\cB_1(\cH)$,
${\det}_{\cH} (I_{\cH} - T)$ the Fredholm determinant of $I_{\cH} - T$, and for $p \in \bbN$,
$p \geq 2$, ${\det}_{\cH,p} (I_{\cH} - T)$ abbreviates the $p$th modified Fredholm determinant of $I_{\cH} - T$.

%%%%%%%%%%%%%%%%%%%%%%%%%%%%%%%%%%%%%%%%%%
%%%%%%%%%%%%%%%%%%%%%%%%%%%%%%%%%%%%%%%%%%
\section{Traces and (Modified) Fredholm Determinants of Operators} \lb{s2}
%%%%%%%%%%%%%%%%%%%%%%%%%%%%%%%%%%%%%%%%%%
%%%%%%%%%%%%%%%%%%%%%%%%%%%%%%%%%%%%%%%%%%

In this section we recall some well-known formulas relating traces and (modified) Fredholm
determinants. For background
on the material used in this section see, for instance, \cite{GGK96}, \cite{GGK97}, \cite[Ch.\ XIII]{GGK00},
\cite[Ch.~IV]{GK69},  \cite[Ch.\ 17]{RS78}, \cite{Si77}, \cite[Ch.\ 3]{Si05}.

To set the stage we start with densely defined, closed, linear operators $A$ in $\cH$ having a trace class resolvent, and hence introduce the following assumption:

%%%%%%
\begin{hypothesis} \lb{h2.1}
Suppose that $A$ is densely defined and closed in $\cH$ with $\rho(A) \neq \emptyset$, and
$(A - z I_{\cH})^{-1} \in \cB_1(\cH)$ for some $($and hence for all\,\footnote{One applies the resolvent equation for $A$ and the binomial theorem.}$)$ $z \in \rho(A)$.
\end{hypothesis}
%%%%%%

Given Hypothesis \ref{h2.1} and $z_0 \in \rho(A)$, consider the bounded, entire family $A(\, \cdot \,)$ defined by
\begin{equation}
A(z) := I_{\cH} - (A - z I_{\cH})(A - z_0 I_{\cH})^{-1} =  (z - z_0) (A - z_0 I_{\cH})^{-1}, \quad
z \in \bbC.     \lb{2.1}
\end{equation}
Employing the formula (cf.\ \cite[Sect.~IV.1]{GK69}, see also \cite[Sect.~I.7]{Ya92}),
\begin{equation}
\tr_{\cH}\big((I_{\cH} - T(z))^{-1} T'(z)\big) = - (d/dz) \ln({\det}_{\cH}(I_{\cH} - T(z))),   \lb{2.2}
\end{equation}
valid for a trace class-valued analytic family $T(\, \cdot \,)$ on an open set $\Omega \subset \bbC$
such that $(I_{\cH} - T(\, \cdot \,))^{-1} \in \cB(\cH)$, and applying it to the entire family $A(\, \cdot \,)$ then results in
\begin{align}
\tr_{\cH}\big((A - z I_{\cH})^{-1}\big) &= - (d/dz) \ln\big({\det}_{\cH}\big(I_{\cH} - (z - z_0) (A - z_0 I_{\cH})^{-1}\big)\big)   \no \\
&= - (d/dz) \ln\big({\det}_{\cH}\big((A - z I_{\cH}) (A - z_0 I_{\cH})^{-1}\big)\big),   \lb{2.3} \\
& \hspace*{5.75cm} z \in \rho(A).    \no
\end{align}

One notes that the left- and hence the right-hand side of \eqref{2.3} is independent of the choice
of $z_0 \in \rho(A)$.

Next, following the proof of \cite[Theorem~3.5\,(c)]{Si05} step by step, and employing a
Weierstrass-type product formula (see, e.g., \cite[Theorem~3.7]{Si05}), yields the subsequent
result (see also \cite{GW95}).

%%%%%%
\begin{lemma} \lb{l2.2}
Assume Hypothesis \ref{h2.1} and let $\lambda_k \in \sigma(A)$ then
\begin{equation} \lb{2.4}
{\det}_{\cH} \big(I_{\cH} - (z-z_0)(A - z_0 I_{\cH})^{-1}\big) = (\lambda_k-z)^{m_a(\lambda_k)}
[C_k +O(\lambda_k - z)], \quad C_k\neq0
\end{equation}
as $z$ tends to $\lambda_k$, that is, the multiplicity of the zero
 of the Fredholm determinant ${\det}_{\cH}\big(I_{\cH} - (z-z_0)(A - z_0 I_{\cH})^{-1}\big)$
at $z=\lambda_k$ equals the algebraic multiplicity of the eigenvalue $\lambda_k$ of $A$.

In addition, denote the spectrum of $A$
by $\sigma(A)=\{\lambda_k\}_{k\in\bbN}$, $\lambda_k \neq \lambda_{k'}$
for $k \neq k'$. Then
\begin{align}
\begin{split}
{\det}_{\cH}(I_{\cH} - (z-z_0)(A - z_0 I_{\cH})^{-1}) &= \prod_{k \in \bbN}
\big[1-(z-z_0)(\lambda_k - z_0)^{-1}\big]^{m_a(\lambda_k)} \\
&= \prod_{k \in \bbN} \bigg(\f{\lambda_k - z}{\lambda_k-z_0}
\bigg)^{m_a(\lambda_k)},    \lb{2.5}
\end{split}
\end{align}
with absolutely convergent products in \eqref{2.5}.
\end{lemma}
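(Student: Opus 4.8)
The plan is to deduce Lemma \ref{l2.2} from the general theory of modified Fredholm determinants recorded in \cite[Ch.~3]{Si05}, exactly as the statement advertises. Since Hypothesis \ref{h2.1} guarantees $(A - z_0 I_{\cH})^{-1} \in \cB_1(\cH)$, the operator-valued function $z \mapsto (z - z_0)(A - z_0 I_{\cH})^{-1}$ is an entire $\cB_1(\cH)$-valued family, so ${\det}_{\cH}\big(I_{\cH} - (z - z_0)(A - z_0 I_{\cH})^{-1}\big)$ is a well-defined entire function of $z$. Its zeros are precisely the points $z$ at which $I_{\cH} - (z - z_0)(A - z_0 I_{\cH})^{-1}$ fails to be invertible, equivalently (multiplying by the bounded invertible $(A - z_0 I_{\cH})$ on the right after rewriting via \eqref{2.1}) the points where $(A - z I_{\cH})(A - z_0 I_{\cH})^{-1}$ is not invertible, i.e.\ where $z \in \sigma(A)$; under Hypothesis \ref{h2.1} the spectrum of $A$ is purely discrete, so $\sigma(A) = \{\lambda_k\}_{k \in \bbN}$ with each $\lambda_k$ an eigenvalue of finite algebraic multiplicity. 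Thus the first task is to identify the order of the zero at $z = \lambda_k$.

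For the local statement \eqref{2.4}, I would follow the proof of \cite[Theorem~3.5\,(c)]{Si05} verbatim, which relates the order of vanishing of a (modified) Fredholm determinant at a point to the algebraic multiplicity of the corresponding eigenvalue via the Riesz projection $P(\lambda_k; A)$ and the factorization of the resolvent in a punctured neighborhood of $\lambda_k$. The key algebraic input is that near $\lambda_k$ one can write $I_{\cH} - (z - z_0)(A - z_0 I_{\cH})^{-1}$ as a product of an operator that is analytic and invertible at $\lambda_k$ with a finite-rank perturbation of the identity acting on $\ran(P(\lambda_k; A))$, whose determinant contributes exactly the factor $(\lambda_k - z)^{m_a(\lambda_k)}$; the constant $C_k \neq 0$ is the value of the analytic invertible factor's determinant at $\lambda_k$. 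This is where essentially all the content sits, but since we are explicitly permitted to quote \cite[Theorem~3.5\,(c)]{Si05}, the argument reduces to checking that our concrete family $(z - z_0)(A - z_0 I_{\cH})^{-1}$ satisfies the hypotheses of that theorem, which it does by Hypothesis \ref{h2.1}.

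For the global product formula \eqref{2.5}, I would invoke the Weierstrass-type product representation of \cite[Theorem~3.7]{Si05}: an entire function arising as a Fredholm determinant of an entire $\cB_1$-valued family of the form $I_{\cH} - (z - z_0) B$ with $B \in \cB_1(\cH)$ has genus zero and is therefore completely determined, up to the normalization at $z = z_0$ (where the determinant equals $1$), by its zeros counted with multiplicity. Combining this with the multiplicity count from \eqref{2.4} gives
\[
{\det}_{\cH}\big(I_{\cH} - (z - z_0)(A - z_0 I_{\cH})^{-1}\big) = \prod_{k \in \bbN} \bigg(\f{\lambda_k - z}{\lambda_k - z_0}\bigg)^{m_a(\lambda_k)},
\]
and rewriting $(\lambda_k - z)/(\lambda_k - z_0) = 1 - (z - z_0)(\lambda_k - z_0)^{-1}$ yields the first line of \eqref{2.5}. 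Absolute convergence of the product is equivalent to $\sum_{k \in \bbN} m_a(\lambda_k) \, |\lambda_k - z_0|^{-1} < \infty$, which is precisely the statement that the nonzero singular values (here, the absolute values of the eigenvalues, counted with algebraic multiplicity, after the spectral decomposition of the trace class operator) of $(A - z_0 I_{\cH})^{-1}$ are summable — i.e.\ $(A - z_0 I_{\cH})^{-1} \in \cB_1(\cH)$, which is Hypothesis \ref{h2.1}. The main obstacle, such as it is, is bookkeeping: making sure the eigenvalues of the trace class operator $(A - z_0 I_{\cH})^{-1}$ are correctly matched (with algebraic, not just geometric, multiplicity) to the eigenvalues $\lambda_k$ of $A$ through the spectral mapping $\lambda_k \mapsto (\lambda_k - z_0)^{-1}$, and that the genus-zero Weierstrass factorization is legitimately applied; both are handled by the cited results once the dictionary is set up.
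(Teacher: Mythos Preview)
Your approach is essentially identical to the paper's, which likewise reduces the lemma to \cite[Theorem~3.5\,(c)]{Si05} for the local multiplicity statement \eqref{2.4} and to the Weierstrass-type product \cite[Theorem~3.7]{Si05} for \eqref{2.5}. One genuine slip in your absolute-convergence argument: you write that the singular values of $(A - z_0 I_{\cH})^{-1}$ are ``here, the absolute values of the eigenvalues,'' but Hypothesis~\ref{h2.1} does not assume $A$ (or its resolvent) is normal, so this identification is false in general. The conclusion you want, $\sum_{k} m_a(\lambda_k)\,|\lambda_k - z_0|^{-1} < \infty$, is still correct, but it comes from Weyl's inequality for trace class operators (e.g., \cite[Theorem~1.15]{Si05}), which bounds the $\ell^1$-norm of the eigenvalue sequence (repeated according to algebraic multiplicity) by the trace norm, not from equating eigenvalues with singular values.
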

%%%%%%%

The case of trace class resolvent operators is tailor-made for a number of one-dimensional
Sturm--Liouville operators (e.g., finite interval problems). But for applications to half-line problems with potentials behaving like $x$, or increasing slower than $x$ at $+\infty$, and similarly for partial differential operators, traces of higher-order powers of resolvents need to be involved which naturally lead to modified Fredholm determinants as follows.

%%%%%%
\begin{hypothesis} \lb{h2.3}
Let $p \in \bbN$, $p \geq 2$, and suppose that $A$ is densely defined and closed in $\cH$ with $\rho(A) \neq \emptyset$, and
$(A - z I_{\cH})^{-1} \in \cB_p(\cH)$ for some $($and hence for all\,$)$ $z \in \rho(A)$.
\end{hypothesis}
%%%%%%

Applying the formula
\begin{equation}
\tr_{\cH}\big((I_{\cH} - T(z))^{-1} T(z)^{p-1} T'(z)\big) = - (d/dz) \ln({\det}_{\cH, p}(I_{\cH} - T(z))),   \lb{2.6}
\end{equation}
valid for a $\cB_p(\cH)$-valued analytic family $T(\, \cdot \,)$ on an open set $\Omega \subset \bbC$
such that $(I_{\cH} - T(\, \cdot \,))^{-1} \in \cB(\cH)$, \cite[Sect.~IV.2]{GK69} (see also
\cite[Sect.~I.7]{Ya92}) to the entire family $A(\, \cdot \,)$ in \eqref{2.1}, assuming Hypothesis \ref{h2.3},  then yields
\begin{align}
& (z - z_0)^{p-1} \tr_{\cH}\big((A - z I_{\cH})^{-1}(A - z_0 I_{\cH})^{1-p}\big)    \no \\
& \quad = - (d/dz) \ln\big({\det}_{\cH, p}\big(I_{\cH} - (z - z_0) (A - z_0 I_{\cH})^{-1}\big)\big),    \lb{2.7} \\
& \quad = - (d/dz) \ln\big({\det}_{\cH, p}\big((A - z I_{\cH}) (A - z_0 I_{\cH})^{-1}\big)\big),  \quad
z \in \rho(A).    \no
\end{align}

In the special case $p=2$ this yields
\begin{align}
\begin{split}
& \tr_{\cH}\big((A - z I_{\cH})^{-1} - (A - z_0 I_{\cH})^{-1}\big)    \\
& \quad = - (d/dz) \ln\big({\det}_{\cH, 2}\big(I_{\cH} - (z - z_0) (A - z_0 I_{\cH})^{-1}\big)\big).    \lb{2.8}
\end{split}
\end{align}

We refer to Section \ref{s3} for an application of \eqref{2.8} to half-line Schr\"odinger operators with potentials diverging at infinity. For additional background and applications of (modified) Fredholm determinants to ordinary differential operators we also refer to \cite{BFK95}, \cite{CGNZ14},
\cite{DD78}, \cite{GK18}, \cite{GM03}, \cite{GZ12}, \cite{Ki01}--\cite{LS77}, \cite{OY12},
and the extensive literature cited therein.

%%%%%%%%%%%%%%%%%%%%%%%%%%
%%%%%%%%%%%%%%%%%%%%%%%%%%
\section{Schr\"odinger Operators on a Half-Line} \lb{s3}
%%%%%%%%%%%%%%%%%%%%%%%%%%
%%%%%%%%%%%%%%%%%%%%%%%%%%

We now illustrate \eqref{2.8} with the help of self-adjoint Schr\"odinger operators $-\frac{d^2}{dx^2} + q$ on the half-line $\bbR_+ = (0,\infty)$ in the particular case where the potential $q$ diverges at $\infty$ and hence gives rise to a purely discrete spectrum (i.e, the absence of essential spectrum).

To this end we introduce the following set of assumptions on $q$:

%%%%%%%
\begin{hypothesis} \lb{h3.1}
Suppose $q$ satisfies
\begin{equation}
q\in L^1_{loc} (\bbR_+; dx), \, \text{ $q$ is real-valued a.e.\ on $\bbR_+$,}    \lb{3.47}
\end{equation}
and for some $ \varepsilon_0 > 0$, $C_0 >0$,  and sufficiently large $x_0 > 0$,
\begin{align}
& q, q' \in AC([x_0, R]) \, \text{ for all $R > x_0$,}     \lb{3.51} \\
& q(x) \geq C_0 \, x^{(2/3) + \varepsilon_0}, \quad x \in (x_0,\infty),     \lb{3.52} \\
& q'/q \underset{x \to \infty}{=} \oh\big(q^{1/2}\big),       \lb{3.53}\\
& \big(q^{-3/2} q'\big)' \in L^1((x_0,\infty); dx).     \lb{3.54}
\end{align}
\end{hypothesis}
%%%%%%%

Condition \eqref{3.52} guarantees that traces and modified determinants in this paper (see, e.g., the ones in Theorem \ref{t3.3}) are well-defined. Conditions 
\eqref{3.53} and \eqref{3.54} are imposed so that \cite[Corollary~2.2.1]{Ea89} is applicable, implying the asymptotic behavior \eqref{3.55}.

Given Hypothesis \ref{h3.1}, we take $\tau_+$ to be the Schr\"odinger  differential expression
\begin{equation} \lb{3.2}
\tau_+= -\frac{d^2}{dx^2} + q(x) \, \text{ for a.e.~$x\in \bbR_+$,}
\end{equation}
and note that $\tau_+$ is regular at $0$ and in the limit point case at $+\infty$.
The \emph{maximal operator}
$H_{+,max}$ in $L^2(\bbR_+;dx)$ associated with $\tau_+$ is defined by
\begin{align}
&H_{+,max} f=\tau_+ f,    \no \\
& \, f \in \dom(H_{+,max})= \big\{g\in L^2(\bbR_+;dx) \, \big| \, g,  g' \in AC([0,b]) \, \text{for all $b > 0$};     \lb{3.3} \\
& \hspace*{3.5cm} \tau_+ g\in  L^2(\bbR_+;dx)\big\},   \no
\intertext{while the \emph{minimal operator} $H_{+,min}$ in
$L^2(\bbR_+;dx)$ associated with
$\tau_+$ is given by}
&H_{+,min} f=\tau_+ f,    \no \\
& \, f \in \dom(H_{+,min})= \big\{g\in L^2(\bbR_+;dx) \, \big| \, g,  g' \in
AC([0,b]) \, \text{for all $b > 0$};   \lb{3.4} \\
&\hspace*{3.45cm}g(0)=g'(0)=0; \, \tau_+ g\in L^2(\bbR_+;dx)\big\}.      \no
\end{align}

One notes that the operator $H_{+,min}$ is symmetric and that
\begin{equation}
H_{+,min}^*=H_{+,max}, \quad H_{+,max}^*=H_{+,min}      \lb{3.5}
\end{equation}
(cf., eg., \cite[Theorem 13.8]{We03}). 
Moreover, all self-adjoint extensions of $H_{+,min}$ are given by the
one-parameter family in $L^2(\bbR_+;dx)$
\begin{align}
&H_{+,\alpha} f=\tau_+ f,    \no \\
& \, f \in \dom(H_{+,\alpha})= \big\{g\in L^2(\bbR_+;dx) \, \big| \, g,  g' \in
AC([0,b]) \, \text{for all $b > 0$};   \lb{3.6} \\
&\hspace*{3.05cm}\sin(\alpha) g'(0) + \cos(\alpha) g(0) = 0; \, \tau_+ g\in L^2(\bbR_+;dx)\big\},      \no \\
& \hspace*{9.15cm} \alpha \in [0, \pi).  \no
\end{align}

Next, we introduce the fundamental system of solutions $\phi_{\alpha}(z, \, \cdot \,)$ and
$\theta_{\alpha}(z, \, \cdot \,)$, $\alpha \in [0,\pi)$, $z \in \bbC$, associated with $H_{+,\alpha}$
satisfying
\begin{equation}
(\tau_+ \psi(z,\, \cdot \,))(x) = z \psi(z,x), \quad z \in \bbC, \; x \in \bbR_+,    \lb{3.22}
\end{equation}
and the initial conditions
\begin{align}
\begin{split}
& \phi_{\alpha}(z,0) = - \sin(\alpha), \quad \phi_{\alpha}'(z,0) = \cos(\alpha),    \\
& \theta_{\alpha}(z,0) = \cos(\alpha), \qquad \, \theta_{\alpha}'(z,0) = \sin(\alpha).
\end{split}
\end{align}

Explicitly, one infers
\begin{align}
\begin{split}
\phi_{\alpha}(z,x)=\phi_{\alpha}^{(0)}(z,x)
+ \int_0^x dx' \, \frac{\sin(z^{1/2}(x-x'))}{z^{1/2}} q(x')
\phi_{\alpha}(z,x'),&    \lb{3.23} \\
z \in \bbC, \; \Im(z^{1/2}) \geq 0, \; x \geq 0,&
\end{split}
\end{align}
with
\begin{equation}
\phi_{\alpha}^{(0)}(z,x) = \cos(\alpha)\frac{\sin(z^{1/2}x)}{z^{1/2}}
- \sin(\alpha)\cos(z^{1/2}x), \quad
z \in \bbC, \; \Im(z^{1/2}) \geq 0, \; x \geq 0,
\end{equation}
and
\begin{align}
\begin{split}
\theta_{\alpha}(z,x)=\theta_{\alpha}^{(0)}(z,x)
+ \int_0^x dx' \, \frac{\sin(z^{1/2}(x-x'))}{z^{1/2}} q(x')
\theta_{\alpha}(z,x'),&    \lb{3.31} \\
z \in \bbC, \; \Im(z^{1/2}) \geq 0, \; x \geq 0,&
\end{split}
\end{align}
with
\begin{equation}
\theta_{\alpha}^{(0)}(z,x) = \cos(\alpha) \cos(z^{1/2} x) + \sin(\alpha)
\f{\sin(z^{1/2} x)}{z^{1/2}},
\quad z \in \bbC, \; \Im(z^{1/2}) \geq 0, \; x \geq 0.
\end{equation}

The Weyl--Titchmarsh solution, $\psi_{+,\alpha}(z, \, \cdot \,)$, and
Weyl--Titchmarsh $m$-function, $m_{+,\alpha}(z)$, corresponding to
$H_{+,\alpha}$, $\alpha \in [0, \pi)$, are then related via,
\begin{equation}
\psi_{+,\alpha}(z, \, \cdot \,) = \theta_{\alpha} (z, \, \cdot \,) + m_{+,\alpha}(z) \phi_{\alpha} (z, \, \cdot \,),
\quad z \in \rho(H_{+,\alpha}), \; \alpha \in [0,\pi),   \lb{3.30}
\end{equation}
where
\begin{equation}
\psi_{+,\alpha}(z, \, \cdot \,) \in L^2(\bbR_+; dx), \quad z \in \rho(H_{+,\alpha}), \;
\alpha \in [0,\pi).
\end{equation}

Let $I_+$ be the identity operator on $L^2(\bbR_+; dx)$. One then obtains for the Green's function $G_{+,\alpha}$ of $H_{+,\alpha}$
expressed in terms of $\phi_{\alpha}$ and $\psi_{+,\alpha}$,
\begin{align}
\begin{split}
& G_{+,\alpha}(z,x,x')
= (H_{+,\alpha} - z I_+)^{-1}(x,x')    \lb{3.57} \\
& \quad = \begin{cases}
\phi_{\alpha}(z,x) \, \psi_{+,\alpha} (z,x'), & 0\leq x\leq x' < \infty, \\
\phi_{\alpha}(z,x') \, \psi_{+,\alpha} (z,x), & 0\leq x' \leq x < \infty, \end{cases}
\quad z \in \rho(H_{+,\alpha}), \; \alpha \in [0,\pi),
\end{split}
\end{align}
utilizing
\begin{equation}
W(\theta_{\alpha} (z,\cdot), \phi_{\alpha}(z,\cdot)) =1, \quad z \in \bbC, \; \alpha \in [0,\pi),
\end{equation}
implying $W(\psi_{+,\alpha} (z,\cdot), \phi_{\alpha}(z,\cdot)) =1$, $z \in \rho(H_{+,\alpha})$. Here $W(f,g) = fg' - f' g$ abbreviates the Wronskian of suitable $f$ and $g$.

By \cite[Corollary~2.2.1]{Ea89}, Hypothesis \ref{h3.1} implies
the existence of two solutions $f_{+,j}(\lambda, \, \cdot \,,x_0)$, $j=1,2$, of
$\tau_+ \psi(\lambda, \, \cdot \,) = \lambda \psi(\lambda, \, \cdot \,)$,
$\lambda < 0$ sufficiently negative (and below $\inf(\sigma(H_{+,\alpha}))$), satisfying
\begin{align}
\begin{split}
& f_{+,j} (\lambda,x,x_0) \underset{x \to \infty}{=} 2^{-1/2} [q(x) - \lambda]^{-1/4}
\exp\bigg((-1)^{j}\int_{x_0}^x dx' [q(x') - \lambda]^{1/2}\bigg)    \\
& \hspace*{2.85cm} \times [1 + \oh(1)],     \\
& {f'_{+,j}} (\lambda,x,x_0) \underset{x \to \infty}{=} (-1)^j 2^{-1/2} [q(x) - \lambda]^{1/4}
\exp\bigg((-1)^{j}\int_{x_0}^x dx' [q(x') - \lambda]^{1/2}\bigg)    \\
& \hspace*{2.85cm} \times [1 + \oh(1)],   \quad  j=1,2,    \lb{3.55}
\end{split}
\end{align}
with
\begin{equation}
W\big(f_{+,1}(\lambda, \, \cdot \,,x_0), f_{+,2}(\lambda, \, \cdot \,,x_0)\big) = 1 .
\end{equation}
(Here we explicitly introduced the $x_0$ dependence of $f_{+,j}$, implied by the choice of normalization in \eqref{3.55}, as keeping track of it later on will become a necessity.)
In particular, $f_{+,1}(\lambda, \, \cdot \,,x_0)$ now plays the analog of the Jost solution in the case of a short-range potential $q$ (i.e., $q \in L^1(\bbR_+; (1 + x)dx)$, $q$ real-valued a.e.~on $\bbR_+$).

By the limit point property of $\tau_+$ at
$+ \infty$ and the asymptotic behavior of $f_{+,1}$ in \eqref{3.55} one infers, in addition,
\begin{align}
\psi_{+,\alpha}(\lambda, \, \cdot \,) &= f_{+,1} (\lambda, \, \cdot \,,x_0) \big/ \big[\sin (\alpha ) f_{+,1} ' (\lambda,0,x_0) + \cos (\alpha ) f_{+,1} (\lambda, 0,x_0)\big],   \lb{3.59} \\
\phi_{\alpha } (\lambda, \, \cdot \,) &= \big[ \cos (\alpha ) f_{+,1} (\lambda, 0,x_0) + \sin (\alpha ) f_{+,1} ' (\lambda,0,x_0) \big] f_{+,2} (\lambda, \, \cdot \,,x_0) \no\\
& \quad  - \big[\cos (\alpha )  f_{+,2} (\lambda, 0,x_0) + \sin (\alpha ) f_{+,2} ' (\lambda ,0,x_0) \big]  f_{+,1} (\lambda, \, \cdot \,,x_0)    \lb{3.59a}
\end{align}
for $\lambda < 0$ sufficiently negative. Analytic continuation with respect to $\lambda$ in
\eqref{3.59} then yields the existence of a unique Jost-type solution $f_{+,1}(z, \, \cdot \,,x_0)$ satisfying
\begin{align}
& \tau_+ f_{+,1} (z, \, \cdot \,,x_0) = z f_{+,1} (z, \, \cdot \,,x_0),
\quad z \in \bbC \backslash \bbR,   \\
& f_{+,1} (z, \, \cdot \,,x_0) \in L^2(\bbR_+; dx),  \quad z \in \bbC \backslash \bbR,
\end{align}
coinciding with $f_{+,1}(\lambda, \, \cdot \,,x_0)$ for $z = \lambda < 0$ sufficiently negative. In addition
one has
\begin{align}
W\big(f_{+,1} (z, \, \cdot \,,x_0), \phi_\alpha (z,\, \cdot \,,x_0))
&= \cos (\alpha ) f_{+,1} (z,0,x_0)+ \sin (\alpha ) f' _{+,1} (z,0,x_0), \no\\
& \hspace{4cm} z \in \rho(H_{+,\alpha} ),
\end{align}
which should be compared with the Jost function $f_+(z, 0)$ in the case where $q$ represents a short-range potential and $\alpha = 0$.

In the following we want to illustrate how Hypothesis \ref{h2.3} and \eqref{2.7} apply to
$H_{+,\alpha}$ in the case $p=2$. For this purpose we first recall the following standard convergence property for trace ideals in $\cH$:

%%%%%%%%%
\begin{lemma} \lb{l3.2}
Let $q \in [1,\infty)$ and assume that $R,R_n,T,T_n\in\cB(\cH)$,
$n\in\bbN$, satisfy
$\slim_{n\to\infty}R_n = R$  and $\slim_{n\to \infty}T_n = T$ and that
$S,S_n\in\cB_q(\cH)$, $n\in\bbN$, satisfy
$\lim_{n\to\infty}\|S_n-S\|_{\cB_q(\cH)}=0$.
Then $\lim_{n\to\infty}\|R_n S_n T_n^\ast - R S T^\ast\|_{\cB_q(\cH)}=0$.
\end{lemma}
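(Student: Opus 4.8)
The plan is to reduce the product estimate to three one-factor estimates via a standard telescoping argument, and to handle the limiting factor (the one living in $\cB_q(\cH)$) directly from its norm convergence, while the bounded factors are controlled by the uniform boundedness principle together with the ideal property of $\cB_q(\cH)$. First I would record the two auxiliary facts that the proof rests on: (i) the \emph{ideal property with operator-norm bounds}, namely that for $R,T \in \cB(\cH)$ and $S \in \cB_q(\cH)$ one has $RST^\ast \in \cB_q(\cH)$ with $\|RST^\ast\|_{\cB_q(\cH)} \le \|R\|_{\cB(\cH)}\,\|S\|_{\cB_q(\cH)}\,\|T\|_{\cB(\cH)}$ (see, e.g., \cite[Ch.~III]{GK69}), and (ii) the fact that a strongly convergent sequence of bounded operators is uniformly bounded, i.e.\ $M := \sup_{n\in\bbN}\big(\|R_n\|_{\cB(\cH)} + \|T_n\|_{\cB(\cH)}\big) < \infty$, by the uniform boundedness principle; in particular $\|R\|_{\cB(\cH)}, \|T\|_{\cB(\cH)} \le M$ as well.

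Next I would telescope the difference as
\begin{align}
\begin{split}
R_n S_n T_n^\ast - R S T^\ast
&= R_n (S_n - S) T_n^\ast + R_n S\,(T_n - T)^\ast + (R_n - R) S T^\ast,
\end{split}
\end{align}
and estimate the three terms in $\|\cdot\|_{\cB_q(\cH)}$ separately. For the first term, the ideal bound (i) together with the uniform bound (ii) gives $\|R_n (S_n - S) T_n^\ast\|_{\cB_q(\cH)} \le M^2 \|S_n - S\|_{\cB_q(\cH)} \to 0$ by hypothesis. The remaining two terms are of the form $B_n K$ or $K B_n^\ast$ with $K \in \cB_q(\cH)$ fixed ($K = ST^\ast$ or $K = RS$, using again (i) to see these are in $\cB_q(\cH)$) and $B_n := R_n - R \to 0$ strongly, respectively $B_n := T_n - T \to 0$ strongly. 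So the crux is the single claim: if $B_n \to 0$ strongly and $K \in \cB_q(\cH)$, then $\|B_n K\|_{\cB_q(\cH)} \to 0$ (and likewise $\|K B_n^\ast\|_{\cB_q(\cH)} \to 0$, which follows by taking adjoints since $\|\cdot\|_{\cB_q(\cH)}$ is adjoint-invariant).

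I expect this last claim to be the main point of the argument. I would prove it by a two-step approximation: given $\eps > 0$, pick a finite-rank operator $F$ with $\|K - F\|_{\cB_q(\cH)} < \eps$ (finite-rank operators are dense in $\cB_q(\cH)$ for $q \in [1,\infty)$). Then
\begin{equation}
\|B_n K\|_{\cB_q(\cH)} \le \|B_n (K - F)\|_{\cB_q(\cH)} + \|B_n F\|_{\cB_q(\cH)}
\le M' \eps + \|B_n F\|_{\cB_q(\cH)},
\end{equation}
where $M' := \sup_n \|B_n\|_{\cB(\cH)} < \infty$ by (ii). For the finite-rank piece, write $F = \sum_{j=1}^{N} (\,\cdot\,, e_j)_{\cH}\, h_j$; then $B_n F = \sum_{j=1}^{N} (\,\cdot\,, e_j)_{\cH}\, B_n h_j$, whose $\cB_q(\cH)$-norm is bounded by $\sum_{j=1}^N \|h_j\|_{\cH}\,\|B_n h_j\|_{\cH} \to 0$ as $n \to \infty$, since $B_n h_j \to 0$ in $\cH$ for each fixed $j$ by strong convergence. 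Hence $\limsup_n \|B_n K\|_{\cB_q(\cH)} \le M'\eps$, and since $\eps$ was arbitrary the claim follows. Combining the three term estimates completes the proof. (The only mild subtlety is that strong convergence of $B_n$ to $0$ does \emph{not} upgrade to norm convergence, so the reduction to finite rank — where strong and norm convergence of $B_nh_j$ coincide on a finite-dimensional range — is exactly what makes it work; this is the step I would present in full detail.)
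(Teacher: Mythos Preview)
Your argument is correct and is precisely the standard proof (uniform boundedness plus the ideal estimate, with the key step reduced to $B_nK\to 0$ in $\cB_q(\cH)$ via finite-rank approximation) that underlies the references \cite{Gr73}, \cite[p.~28--29]{Si05}, \cite[Lemma~6.1.3]{Ya92} which the paper simply cites in lieu of a proof, noting only that ``taking adjoints'' handles the $T_n^\ast$ factor---exactly as you do. Two cosmetic slips worth fixing: the rank-one bound for $B_nF$ should read $\sum_j \|e_j\|_{\cH}\|B_n h_j\|_{\cH}$ (not $\|h_j\|_{\cH}\|B_n h_j\|_{\cH}$), and for the middle term $R_n S(T_n-T)^\ast$ you should first pull out $\|R_n\|_{\cB(\cH)}\le M$ and then apply your claim with the \emph{fixed} $K=S$ (your text says $K=RS$, but actually $R_nS$ appears, which is $n$-dependent); neither affects the conclusion.
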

%%%%%%%%%

\noindent 
\smallskip 
(Here the strong limit of a sequence of bounded operators $B_n$, $n \in \bbN$, as $n\to \infty$, was 
abbreviated by $\slim_{n \to \infty} B_n$.)  Lemma \ref{l3.2} follows, for instance, from 
\cite[Theorem 1]{Gr73}, \cite[p.\ 28--29]{Si05}, or
\cite[Lemma 6.1.3]{Ya92} with a minor additional effort (taking adjoints, etc.).

Next, we recall a few facts that enable one to compute the trace of a nonnegative trace class integral 
operator in a straightforward manner: \\[1mm]
$(i)$ Let $0 \leq A \in \cB(\cH)$, $\{\phi_m\}_{m \in \bbN}$ an orthonormal basis in $\cH$ (without loss of generality we assume $\dim(\cH) = \infty$), then
\begin{equation}
\sum_{m \in \bbN} (\phi_m, A \phi_m)_{\cH} \in [0,\infty) \cup \{\infty\}
\end{equation}
is independent of the orthonormal basis $\{\phi_m\}_{m \in \bbN}$ in $\cH$. Moreover, 
\begin{equation}
\sum_{m \in \bbN} (\phi_m, A \phi_m)_{\cH} < \infty \, \text{ if and only if } \, A \in \cB_1(\cH),
\end{equation}
and if $A \in \cB_1(\cH)$, 
\begin{equation}
\tr_{\cH}(A) = \sum_{m \in \bbN} (\phi_m, A \phi_m)_{\cH} = \sum_{j \in J} \lambda_j(A) = \|A\|_{\cB_1(\cH)}, 
\end{equation}
where $0 \leq \lambda_j(A)$, $j \in J$, $J \subseteq \bbN$, denote the eigenvalues of $A$ counting multiplicity.
(For details, see, e.g., \cite[Theorems~2.14 and 3.1]{Si05}.) \\[1mm]
$(ii)$ Let $0 \leq K \in \cB\big(L^2(\Omega; d^nx)\big)$, $\Omega \subseteq \bbR^n$, and suppose that $K$ is an integral operator with continuous integral kernel $K(\, \cdot \,,\, \cdot \,)$ on $\Omega \times \Omega$. Then,
\begin{align}
& 0 \leq K(x,x), \quad x \in \Omega,    \\
& |K(x,x)| \leq |K(x,x)|^{1/2} |K(x',x')|^{1/2}, \quad x, x' \in \Omega, 
\end{align} 
and for all orthonormal bases $\{e_m\}_{m \in \bbN}$ in $L^2(\Omega; d^nx)$,
\begin{equation}
\sum_{m \in \bbN} (e_m, K e_m)_{L^2(\Omega; d^nx)} = \int_{\Omega} d^n x \, K(x,x).    \lb{3.33}
\end{equation}
In particular, the finiteness of either side in \eqref{3.33} implies that of the other. Hence, 
$K \in \cB_1\big(L^2(\Omega; d^nx)\big)$ if and only if $K(\, \cdot \,,\, \cdot \,) \in L^1(\Omega; d^nx)$, and, in this case,
\begin{equation}
\tr_{L^2(\Omega; d^nx)} (K) = \int_{\Omega} d^n x \, K(x,x) < \infty.    \lb{3.34}
\end{equation}
(For more details we refer, e.g., to \cite[Proposition~5.6.9]{Da07}. For more general measure spaces see, e.g., \cite[p.~65--66]{RS79}, \cite[Sect.~3.11]{Si15}.) 

Next, we introduce the family of self-adjoint projections $P_R$ in $L^2(\bbR_+; dx)$ via
\begin{equation}
(P_R f)(x) = \chi_{[0,R]}(x) f(x), \quad f \in L^2(\bbR_+; dx), \, R > 0,
\end{equation}
with $\chi_{[0,R]}(\, \cdot \,)$ the characteristic function associated with the interval
$[0,R]$, $R > 0$. ($P_R$ will play the role of $R_n, T_n$ in our application of Lemma \ref{l3.2}
in the proof of Theorem \ref{t3.3} below.)

One then obtains the following results.

%%%%%%%
\begin{theorem} \lb{t3.3}
Assume Hypothesis \ref{h3.1}, $z, z_0 \in \rho(H_{+,\alpha})$, and $\alpha \in [0,\pi)$. Then,
\begin{equation}
\big[(H_{+,\alpha} -z I_+)^{-1} - (H_{+,\alpha} -z_0 I_+)^{-1}\big] \in \cB_1\big(L^2(\bbR_+; dx)\big),     \lb{3.60}
\end{equation}
and
\begin{align}
& \tr_{L^2(\bbR_+;dx)} \big((H_{+,\alpha} - z I_+)^{-1} - (H_{+,\alpha} - z_0 I_+)^{-1}\big)    \no \\
& \quad = - \f{d}{dz} \ln\big(\det{_{2,L^2(\bbR_+;dx)}} \big(I_+ - (z - z_0) (H_{+,\alpha } - z_0 I_+)^{-1}\big)\big)
\no \\
& \quad = \f{d}{dz} \ln\big(\sin (\alpha) f_{+,1} ' (z,0,x_0)+\cos (\alpha )  f_{+,1}(z,0,x_0)\big)\bigg|_{z=z_0} \no\\
& \qquad - \f{d}{dz} \ln\big(\sin (\alpha ) f_{+,1}' (z,0,x_0) + \cos (\alpha ) f_{+,1}(z,0,x_0)\big)    \no \\
& \qquad +\f{1}{2}  \cI (z,z_0,x_0),    \lb{3.65}
\end{align}
as well as,
\begin{align}
& \det{_{2,L^2(\bbR_+;dx)}} \big(I_+ - (z - z_0) (H_{+,\alpha } - z_0 I_+)^{-1}\big)    \no \\
& \quad = \bigg[\f{ \sin (\alpha ) f_{+,1} ' (z,0,x_0) + \cos (\alpha ) f_{+,1} (z,0,x_0) }{ \sin (\alpha ) f_{+,1} ' (z_0,0,x_0) + \cos (\alpha ) f_{+,1} (z_0,0,x_0)}\bigg]   \no \\
& \qquad \times \exp{\bigg( - (z - z_0) \,\,\f{ \sin (\alpha ) \dot{f^{\, \prime}}_{\!\!\!\!+,1} (z_0,0,x_0)+\cos (\alpha ) \dot{f}_{+,1} (z_0,0,x_0) }
{ \sin (\alpha ) f_{+,1}'(z_0,0,x_0)+\cos (\alpha ) f_{+,1} (z_0, 0,x_0) } \bigg)}    \lb{3.66}  \\
& \qquad \times \exp\bigg(- \f{1}{2} \int_{z_0}^z d \zeta \, \cI(\zeta,z_0,x_0)\bigg),    \no
\end{align}
where we abbreviated $\dot{} = d/dz$ and 
\begin{equation}
\cI(z,z_0,x_0) = \int_{x_0}^{\infty} dx \big\{[q(x) - z]^{-1/2} - [q(x) - z_0]^{-1/2}\big\}.    \lb{3.66a}
\end{equation}
\end{theorem}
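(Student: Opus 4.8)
The plan is to establish \eqref{3.60} first, then identify the modified Fredholm determinant with an explicit $z$-dependent expression by combining the abstract trace/determinant identity \eqref{2.8} with an explicit computation of the trace of the resolvent difference, and finally integrate to obtain \eqref{3.66}. First I would verify that $(H_{+,\alpha}-zI_+)^{-1}\in\cB_2\big(L^2(\bbR_+;dx)\big)$: using the Green's function \eqref{3.57} and the known asymptotics \eqref{3.55} of $f_{+,1}$ (together with \eqref{3.59}), one sees that $G_{+,\alpha}(z,x,x')$ decays fast enough on the diagonal (because $\phi_\alpha(z,x)\psi_{+,\alpha}(z,x)\sim \text{const}\cdot[q(x)-z]^{-1/2}$ as $x\to\infty$, which is $\Oh\big(x^{-(1/3)-\varepsilon_0/2}\big)$ by \eqref{3.52}), so that $\int\!\!\int |G_{+,\alpha}(z,x,x')|^2\,dx\,dx'<\infty$, giving the Hilbert--Schmidt property. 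For the trace class property \eqref{3.60} of the resolvent \emph{difference}, I would use the first resolvent identity to write the difference as $(z-z_0)(H_{+,\alpha}-zI_+)^{-1}(H_{+,\alpha}-z_0I_+)^{-1}$, a product of two Hilbert--Schmidt operators, hence trace class; alternatively one computes its integral kernel directly from \eqref{3.57} and checks the diagonal is in $L^1$ using $(ii)$ above (the $[q(x)-z]^{-1/2}-[q(x)-z_0]^{-1/2}$ combination decays like $x^{-(4/3)-\varepsilon_0}$, which is integrable — this is exactly why the tail integral $\cI$ in \eqref{3.66a} converges).

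The core computation is the trace itself. By the resolvent identity the kernel of the difference is $(z-z_0)\int_0^\infty G_{+,\alpha}(z,x,y)G_{+,\alpha}(z_0,y,x)\,dy$ evaluated on the diagonal, but it is cleaner to differentiate: $\tr\big((H_{+,\alpha}-zI_+)^{-1}-(H_{+,\alpha}-z_0I_+)^{-1}\big)=\int_0^\infty \partial_z G_{+,\alpha}(z,x,x)\,dx$ after justifying the interchange of $\int$ and $d/dz$ (dominated convergence, using the uniform-in-$z$-on-compacts bounds on the kernel). Now $G_{+,\alpha}(z,x,x)=\phi_\alpha(z,x)\psi_{+,\alpha}(z,x)$, and by \eqref{3.59}, $\psi_{+,\alpha}(z,x)=f_{+,1}(z,x,x_0)/D_\alpha(z)$ with $D_\alpha(z):=\sin(\alpha)f_{+,1}'(z,0,x_0)+\cos(\alpha)f_{+,1}(z,0,x_0)$. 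The standard trick for such trace formulas is to split $\int_0^\infty=\int_0^{x_0}+\int_{x_0}^\infty$ and, on each piece, use the differential equation to integrate $\partial_z(\phi_\alpha\psi_{+,\alpha})$ by parts: differentiating $\tau_+\psi=z\psi$ in $z$ gives $(\tau_+-z)\dot\psi=\psi$, and multiplying by $\phi_\alpha$, multiplying $(\tau_+-z)\phi_\alpha=0$ by $\dot\psi$, subtracting and integrating produces a total-derivative (Wronskian-type) term $W(\phi_\alpha,\dot\psi_{+,\alpha})(x)\big|$ plus $\int \phi_\alpha\psi_{+,\alpha}\,dx$; rearranged, $\int_a^b \phi_\alpha(z,x)\psi_{+,\alpha}(z,x)\,dx = -\big[W(\phi_\alpha(z,\cdot),\dot\psi_{+,\alpha}(z,\cdot))\big]_a^b$ (equivalently using $\partial_z G(z,x,x)$). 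Taking $a=0$, $b\to\infty$ and inserting the boundary condition at $0$ and the $L^2$/asymptotic behavior \eqref{3.55} at $\infty$ collapses the boundary terms to $-(d/dz)\ln D_\alpha(z)$ from the endpoint $0$; the asymptotics at $+\infty$, however, do \emph{not} vanish — the subleading behaviour of $f_{+,1}$ contributes precisely the $\frac12[q(x)-z]^{-1/2}$ density, whose integral over $(x_0,\infty)$, renormalized against the same quantity at $z_0$, is $\tfrac12\cI(z,z_0,x_0)$. Assembling, and remembering that the trace of the \emph{difference} forces the extra fixed term $+(d/dz)\ln D_\alpha(z)\big|_{z=z_0}$, yields \eqref{3.65}; then \eqref{2.8} gives the middle equality.

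The last identity \eqref{3.66} follows by integrating \eqref{3.65} from $z_0$ to $z$: $\ln\big({\det}_{2}(\cdots)\big)$ is the antiderivative, it vanishes at $z=z_0$ (since ${\det}_2(I_+)=1$), and integrating $-(d/dz)\ln D_\alpha(z)$ gives $\ln[D_\alpha(z)/D_\alpha(z_0)]$, integrating the constant $(d/dz)\ln D_\alpha|_{z=z_0}$ gives the linear-in-$(z-z_0)$ exponential factor with coefficient $\dot D_\alpha(z_0)/D_\alpha(z_0)$, and integrating $\tfrac12\cI$ gives the last exponential; exponentiating produces \eqref{3.66}. I expect the main obstacle to be the rigorous handling of the boundary term at $+\infty$: one must show that the Wronskian $W(\phi_\alpha(z,\cdot),\dot\psi_{+,\alpha}(z,\cdot))(x)$, which a priori could diverge since $\phi_\alpha$ grows, in fact has a finite, explicitly computable limit as $x\to\infty$ governed by the second-order term in the WKB-type asymptotics \eqref{3.55}, and that the error terms $\oh(1)$ there are uniform and differentiable enough in $z$ — this is exactly where Hypothesis \ref{h3.1}, in particular \eqref{3.53}--\eqref{3.54} via \cite[Corollary~2.2.1]{Ea89}, is essential, and where the exact form of the density $[q(x)-z]^{-1/2}$ in $\cI$ emerges. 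A secondary technical point is justifying the differentiation under the integral sign and the Lemma \ref{l3.2} approximation argument via the projections $P_R$ to reduce diagonal-of-kernel computations to the finite-interval case where everything is manifestly legitimate.
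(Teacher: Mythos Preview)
Your overall strategy matches the paper's: establish the Hilbert--Schmidt property of the resolvent (hence trace class of the difference), express the trace via the diagonal of the Green's function, convert $\int_0^R \phi_\alpha\psi_{+,\alpha}\,dx$ into Wronskian boundary terms using $(\tau_+-z)\dot\psi=\psi$, extract the $R\to\infty$ asymptotics, and finally integrate in $z$ to obtain \eqref{3.66}. Your route to \eqref{3.60} via the resolvent identity (product of two $\cB_2$ operators) is exactly the alternative the paper records in Remark~\ref{r3.4a}; the paper itself instead uses positivity of the resolvent difference for real $\lambda_0<\lambda<0$ together with the Mercer-type criterion \eqref{3.33}--\eqref{3.34}, then analytically continues.

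Two concrete inaccuracies are worth fixing. First, the displayed identity $\tr\big((H_{+,\alpha}-zI_+)^{-1}-(H_{+,\alpha}-z_0I_+)^{-1}\big)=\int_0^\infty \partial_z G_{+,\alpha}(z,x,x)\,dx$ is wrong as written: the right-hand side is formally $\tr\big((H_{+,\alpha}-zI_+)^{-2}\big)$, not the trace of the difference. What you need (and what the paper does, via the $P_R$ truncation and Lemma~\ref{l3.2}) is $\int_0^\infty[G_{+,\alpha}(z,x,x)-G_{+,\alpha}(z_0,x,x)]\,dx$. Second, the boundary term $W(\phi_\alpha(z,\cdot),\dot\psi_{+,\alpha}(z,\cdot))(0)$ is \emph{zero}, since $\sin(\alpha)\psi'_{+,\alpha}(z,0)+\cos(\alpha)\psi_{+,\alpha}(z,0)\equiv 1$ is $z$-independent; the term $\dot D_\alpha/D_\alpha$ does \emph{not} come from $x=0$ but from the $R\to\infty$ end, through the normalization $\psi_{+,\alpha}=f_{+,1}/D_\alpha$ in \eqref{3.59}. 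Indeed the paper shows (their \eqref{3.86a}) that $W(\phi_\alpha,\dot\psi_{+,\alpha})(R)\sim \dot D_\alpha(\lambda)/D_\alpha(\lambda)-\tfrac12\int_{x_0}^R[q-\lambda]^{-1/2}$, each piece \emph{divergent}, with only the difference at $\lambda,\lambda_0$ converging to the stated answer. To justify this they write $\dot f_{+,1}$ by variation of parameters (their \eqref{3.78}--\eqref{3.82}), pin down the free constant $c_2(z)$ by a contradiction argument, and then read off the leading asymptotics \eqref{3.85}. That is precisely your anticipated ``main obstacle,'' and your phrase ``second-order term in the WKB-type asymptotics'' is where the actual work lies.
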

%%%%%%%
\begin{proof}
Since the resolvents of $H_{+,\alpha}$, $\alpha \in (0,\pi)$, and $H_{+,0}$ differ only by a rank-one operator, it suffices 
to choose $\alpha = 0$ when establishing \eqref{3.60}.

We will first prove \eqref{3.60} for $z=\lambda < 0$, $z_0 = \lambda_0 < \lambda < 0$, and  employ 
monotonicity of resolvents with respect to $\lambda < 0$ sufficiently negative, implying 
\begin{equation}
0 \leq \big[(H_{+,0} -\lambda I_+)^{-1} - (H_{+,0} - \lambda_0 I_+)^{-1}\big], \quad
\lambda_0 < \lambda < 0,     \lb{3.66b} 
\end{equation}
with $\lambda < 0$ sufficiently negative (the latter will be assumed for most of the remainder of this proof). Subsequently, we will apply \eqref{3.34} to $K$ given by the right-hand side of inequality \eqref{3.66b}.

Equations \eqref{3.59} and \eqref{3.59a} yield for $\alpha =0$,
\begin{align}
\begin{split}
\phi_0 (\lambda, \, \cdot \,) \psi_{+,0} (\lambda, \, \cdot \,)
&= f_{+,1} (\lambda, \, \cdot \,,x_0) f_{+,2} (\lambda, \, \cdot \,,x_0)    \\
& \quad - f_{+,1} (\lambda, 0,x_0)^{-1} f_{+,2} (\lambda, 0,x_0)
f_{+,1} (\lambda, \, \cdot \,,x_0)^2,    \lb{3.71}
\end{split}
\end{align}
and since by \eqref{3.55} for $j=1$ integrability properties of \eqref{3.71} over $\bbR_+$
depend on those of $f_{+,1} (\lambda, \, \cdot \, ,x_0 \,) f_{+,2} (\lambda, \, \cdot \, ,x_0 \,)$, we now investigate
the latter on $[x_0, \infty)$. Employing \eqref{3.55} once more then yields
\begin{align}
& 0 \leq [\phi_0 (\lambda, x) \psi_{+,0} (\lambda, x)
- \phi_0 (\lambda_0, x) \psi_{+,0} (\lambda_0, x)]     \no \\
& \underset{x \to \infty}{=} 2^{-1} \big\{[q(x) - \lambda]^{-1/2} - [q(x) - \lambda_0]^{-1/2}\big\} [1 + \oh(1)]
\no \\
& \underset{x \to \infty}{=} 4^{-1} (\lambda - \lambda_0) q(x)^{-3/2} [1 + \oh(1)]    \no \\
& \underset{x \to \infty}{=} 4^{-1} (\lambda - \lambda_0) \, C_0 \, x^{-1 - (3 \varepsilon_0/2)} [1 + \oh(1)],
\end{align}
according to \eqref{3.52}, proving integrability of $[\phi_0 (\lambda, \, \cdot \,) \psi_{+,0} (\lambda,  \, \cdot \,)
- \phi_0 (\lambda_0,  \, \cdot \,) \psi_{+,0} (\lambda_0,  \, \cdot \,)] $ near $+\infty$. An application of \eqref{3.34} to $K$ given by the right-hand side of inequality \eqref{3.66b} then yields \eqref{3.60} with $z=\lambda$, $z_0 = \lambda_0 < \lambda < 0$. Analytic continuation in $\lambda$ and subsequently in $\lambda_0$ proves \eqref{3.60}. 

By \eqref{2.7} with $p=2$ this proves the first equality in \eqref{3.65}.

To prove the second equality in \eqref{3.65}, we now apply Lemma \ref{l3.2} in the trace class
case $q=1$ and combine it with \eqref{3.60} to arrive at
\begin{align}
& \tr_{L^2(\bbR_+;dx)} \big((H_{+,\alpha } - \lambda I_+)^{-1} - (H_{+,\alpha } - \lambda_0 I_+)^{-1}\big)    \no \\
& \quad = \lim_{R \to \infty}
\tr_{L^2(\bbR_+;dx)} \big(P_R \big[(H_{+,\alpha } - \lambda I_+)^{-1} - (H_{+,\alpha } - \lambda_0 I_+)^{-1}\big] P_R\big)
\no \\
& \quad = \lim_{R \to \infty} \int_0^R dx \, [\phi_\alpha  (\lambda, x) \psi_{+,\alpha } (\lambda, x)
- \phi_\alpha (\lambda_0, x) \psi_{+,\alpha } (\lambda_0, x)]    \no \\
& \quad = \lim_{R \to \infty}
\big[W\big(\phi_\alpha (\lambda_0, \, \cdot \,), \dot \psi_{+,\alpha } (\lambda_0, \, \cdot \,) \big)(R)
- W\big(\phi_\alpha (\lambda, \, \cdot \,), \dot \psi_{+,\alpha } (\lambda, \, \cdot \,) \big)(R) \big]    \no \\
& \qquad + W\big(\phi_\alpha (\lambda, \, \cdot \,), \dot \psi_{+,\alpha } (\lambda, \, \cdot \,) \big)(0)
- W\big(\phi_\alpha (\lambda_0, \, \cdot \,), \dot \psi_{+,\alpha } (\lambda_0, \, \cdot \,) \big)(0)   \no \\
& \quad = \lim_{R \to \infty}
\big[W\big(\phi_\alpha (\lambda_0, \, \cdot \,), \dot \psi_{+,\alpha } (\lambda_0, \, \cdot \,) \big)(R)
- W\big(\phi_\alpha (\lambda, \, \cdot \,), \dot \psi_{+,\alpha } (\lambda, \, \cdot \,) \big)(R) \big],  \lb{3.39}
\end{align}
since
\begin{align}
& W \big(\phi _\alpha (\lambda , \cdot),\dot{\psi}_{+,\alpha} (\lambda , \cdot ) \big) (0) = - \sin (\alpha ) \dot{\psi'} _{\!\!\!+,\alpha }  (\lambda ,0) -\cos (\alpha ) \dot{\psi} _{+,\alpha } (\lambda , 0) \no\\
&\qquad = - \f{d}{d\lambda} \big[ \sin (\alpha ) \psi_{+,\alpha } ' (\lambda , 0) + \cos (\alpha ) \psi_{+,\alpha } (\lambda ,0) \big] =0.
\end{align}
It remains to analyze the right-hand side of \eqref{3.39}. To this end we note that
\begin{equation}
\tau_+ \dot{f}_{+,1} (z,x,x_0) = z \dot{f}_{+,1} (z,x,x_0) + f_{+,1} (z,x,x_0),
\end{equation}
and hence
\begin{align}
\dot{f}_{+,1} (z,x,x_0) &= c_1(z) f_{+,1} (z,x,x_0) + c_2(z) f_{+,2} (z,x,x_0)   \no \\
& \quad + f_{+,1} (z,x,x_0) \int_0^x dx' \, f_{+,1} (z,x',x_0) f_{+,2} (z,x',x_0)   \lb{3.78} \\
& \quad - f_{+,2} (z,x,x_0) \int_0^x dx' \, f_{+,1} (z,x',x_0)^2,     \no \\
\dot{f^{\; \prime}}_{\!\!\!\!+,1} (z,x,x_0) &= c_1(z) f^{\; \prime}_{+,1} (z,x,x_0)
+ c_2(z) f^{\; \prime}_{+,2} (z,x,x_0)    \no \\
& \quad + f^{\; \prime}_{+,1} (z,x,x_0) \int_0^x dx' \, f_{+,1} (z,x',x_0) f_{+,2} (z,x',x_0)
 \lb{3.79} \\
& \quad - f^{\; \prime}_{+,2} (z,x,x_0) \int_0^x dx' \, f_{+,1} (z,x',x_0)^2.   \no
\end{align}
Next, we claim that
\begin{equation}
c_2(z) = \int_0^{\infty} dx' \, f_{+,1} (z,x',x_0)^2, \quad z \in \rho(H_{+,\alpha }),    \lb{3.80}
\end{equation}
and hence \eqref{3.78}, \eqref{3.79} simplify to
\begin{align}
\dot{f}_{+,1} (z,x,x_0) &= c_1(z) f_{+,1} (z,x,x_0)     \no \\
& \quad + f_{+,1} (z,x,x_0) \int_0^x dx' \, f_{+,1} (z,x',x_0) f_{+,2} (z,x',x_0)
\lb{3.81} \\
& \quad + f_{+,2} (z,x,x_0) \int_x^{\infty} dx' \, f_{+,1} (z,x',x_0)^2,     \no \\
\dot{f^{\; \prime}}_{\!\!\!\!+,1} (z,x,x_0) &= c_1(z) f^{\; \prime}_{+,1} (z,x,x_0)    \no \\
& \quad + f^{\; \prime}_{+,1} (z,x,x_0) \int_0^x dx' \, f_{+,1} (z,x',x_0) f_{+,2} (z,x',x_0)
\lb{3.82} \\
& \quad + f^{\; \prime}_{+,2} (z,x,x_0) \int_x^{\infty} dx' \, f_{+,1} (z,x',x_0)^2.     \no
\end{align}
To infer the necessity of \eqref{3.80} one can argue by contradiction as follows: If
\eqref{3.80} does not hold, then integrating $\dot{f}_{+,1} (z,x)$ with respect
to $z$ from $\lambda_0$ to $\lambda$ along the negative real axis on the left-hand side
of \eqref{3.78} yields
\begin{equation}
\int_{\lambda_0}^{\lambda} dz \, \dot{f}_{+,1} (z,x,x_0) = f_{+,1} (\lambda,x,x_0)
- f_{+,1} (\lambda_0,x,x_0) \underset{x \to \infty}{\longrightarrow} 0    \lb{3.83}
\end{equation}
by the first asymptotic relation in \eqref{3.55}. However,
with \eqref{3.80} violated, integrating the right-hand side of \eqref{3.78} with respect
to $z$ from $\lambda_0$ to $\lambda$ along the negative real axis now yields several contributions
vanishing as $x \to \infty$ (again invoking \eqref{3.55}), but there will also be one integral of the
type
\begin{equation}
\int_{\lambda_0}^{\lambda} dz \, f_{+,2} (z,x,x_0) A(z,x)
\underset{x \to \infty}{\centernot \longrightarrow} 0     \lb{3.84}
\end{equation}
where $A(z,\, \cdot \,)$ is bounded with a finite nonzero limit, $\lim_{x \to \infty} A(z,x) = A(z,\infty) \neq 0$.
Relation \eqref{3.84} contradicts \eqref{3.83}, proving \eqref{3.80}.

Investigating the asymptotics of the right-hand sides of \eqref{3.81}, \eqref{3.82}, invoking the
leading asymptotic behavior \eqref{3.55}, then shows that to obtain the leading asymptotic behavior
of $\dot{f}_{+,1} (\lambda,x,x_0)$, $\dot{f}_{+,2} (\lambda,x,x_0)$ one can formally differentiate relations
\eqref{3.55} with respect to $\lambda$ and hence obtains,
\begin{align}
\begin{split}
& \dot{f}_{+,1} (\lambda,x,x_0) \underset{x \to \infty}{=} 2^{-3/2} [q(x) - \lambda]^{-1/4}
\int_{x_0}^x dx''\, [q(x'') - \lambda]^{-1/2}   \\
& \hspace*{2.9cm} \times \exp\bigg(- \int_{x_0}^x dx' [q(x') - \lambda]^{1/2}\bigg) [1 + \oh(1)],  \\
& \dot{f^{\; \prime}}_{\!\!\!\!+,1} (\lambda,x,x_0) \underset{x \to \infty}{=} - 2^{-3/2} [q(x) - \lambda]^{1/4}
\int_{x_0}^x dx''\, [q(x'') - \lambda]^{-1/2}   \lb{3.85} \\
& \hspace*{3cm} \times \exp\bigg(- \int_{x_0}^x dx' [q(x') - \lambda]^{1/2}\bigg) [1 + \oh(1)],
\end{split}
\end{align}
for $\lambda < 0$ sufficiently negative according to our convention in this proof.

Next, one utilizes \eqref{3.59} and \eqref{3.59a} and computes
\begin{align}
& W\big(\phi_\alpha (\lambda, \, \cdot \,), \dot \psi_{+,\alpha } (\lambda, \, \cdot \,) \big)(R)   \no \\
& \quad \underset{R \to \infty}{=} f_{+,2} (\lambda,R,x_0)
\dot{f^{\; \prime}}_{\!\!\!\!+,1} (\lambda,R,x_0)   \no \\
& \qquad \quad \; - f_{+,2} (\lambda,R,x_0) f_{+,1} ' (\lambda,R,x_0) \, \f{\sin (\alpha ) \dot{f^{\; \prime}}_{\!\!\!\!+,1}(\lambda,0,x_0)+\cos (\alpha ) \dot{f}_{+,1} (\lambda , 0,x_0) }
{\sin (\alpha ) f_{+,1} ' (\lambda , 0,x_0) + \cos (\alpha ) f_{+,1} (\lambda , 0,x_0) }   \no \\
& \qquad \quad \; -  f^{\; \prime}_{+,2}(\lambda,R,x_0) \dot{f}_{+,1} (\lambda,R,x_0)  \no \\
& \qquad \quad \;
+ f^{\; \prime}_{+,2}(\lambda,R,x_0) f_{+,1} (\lambda , R,x_0)  \, \f{\sin (\alpha ) \dot{f^\prime}_{\!\!\!\!+,1}(\lambda,0,x_0)+\cos (\alpha ) \dot{f}_{+,1} (\lambda , 0,x_0)}
{\sin (\alpha ) f'_{+,1} (\lambda,0,x_0) +\cos (\alpha ) f_{+,1} (\lambda , 0,x_0)}  \no \\
& \quad \underset{R \to \infty}{=} \dot{f^{\; \prime}}_{\!\!\!\!+,1} (\lambda,R,x_0) f_{+,2} (\lambda,R,x_0)
- \dot{f}_{+,1} (\lambda,R,x_0) f^{\; \prime}_{+,2}(\lambda,R,x_0)    \no \\
& \qquad \quad \; + \f{\sin (\alpha ) \dot{f'}_{\!\!\!\!+,1} (\lambda,0,x_0)+\cos (\alpha )\dot{f}_{+,1} (\lambda , 0,x_0)}
{\sin (\alpha ) f_{+,1} ' (\lambda , 0,x_0) + \cos (\alpha ) f_{+,1} (\lambda,0,x_0)},    \lb{3.86}
\end{align}
again for $\lambda < 0$ sufficiently negative. Insertion of \eqref{3.55} and \eqref{3.85} into
\eqref{3.86} finally implies
\begin{align}
W\big(\phi_\alpha (\lambda, \, \cdot \,), \dot \psi_{+,\alpha } (\lambda, \, \cdot \,) \big)(R)
& \underset{R \to \infty}{=}  \f{\sin (\alpha ) \dot{f'}_{\!\!\!\!+,1} (\lambda,0,x_0)+\cos (\alpha )\dot{f}_{+,1} (\lambda , 0,x_0)}
{\sin (\alpha ) f_{+,1} ' (\lambda , 0,x_0) + \cos (\alpha ) f_{+,1} (\lambda,0,x_0)}   \no \\[1mm]
& \qquad \;\; - 2^{-1} \bigg(\int_{x_0}^R dx \, [q(x) - \lambda]^{- 1/2}\bigg) [1 + \oh(1)].    \label{3.86a}
\end{align}
Returning to \eqref{3.39} this yields
\begin{align}
& \tr_{L^2(\bbR_+;dx)} \big((H_{+,\alpha } - \lambda I_+)^{-1} - (H_{+,\alpha } - \lambda_0 I_+)^{-1}\big)    \no \\
& \quad = \lim_{R \to \infty}
\big[W\big(\phi_\alpha (\lambda_0, \, \cdot \,), \dot \psi_{+,\alpha } (\lambda_0, \, \cdot \,) \big)(R)
- W\big(\phi_\alpha (\lambda, \, \cdot \,), \dot \psi_{+,\alpha } (\lambda, \, \cdot \,) \big)(R) \big],  \no \\
& \underset{R \to \infty}{=}  \f{\sin (\alpha ) \dot{f'}_{\!\!\!+,1} (\lambda_0,0,x_0)+\cos (\alpha )\dot{f}_{+,1} (\lambda_0 , 0,x_0)}
{\sin (\alpha ) f_{+,1} ' (\lambda_0 , 0,x_0) + \cos (\alpha ) f_{+,1} (\lambda_0,0,x_0)}  \no \\[1mm]
&\qquad -\f{\sin (\alpha ) \dot{f'}_{\!\!\!+,1} (\lambda,0,x_0)+\cos (\alpha )\dot{f}_{+,1} (\lambda , 0,x_0)}
{\sin (\alpha ) f_{+,1} ' (\lambda , 0,x_0) + \cos (\alpha ) f_{+,1} (\lambda,0,x_0)} \no \\[1mm]
& \qquad \;\; + 2^{-1} \bigg(\int_{x_0}^R dx \,
\big\{ [q(x) - \lambda]^{-1/2} - [q(x) - \lambda_0]^{-1/2}\big\}\bigg) [1 + \oh(1)]    \no \\
& \;\;\, = \;\; \f{\sin (\alpha ) \dot{f'}_{\!\!\!+,1} (\lambda_0,0,x_0)+\cos (\alpha )\dot{f}_{+,1} (\lambda_0 , 0,x_0)}
{\sin (\alpha ) f_{+,1} ' (\lambda_0 , 0,x_0) + \cos (\alpha ) f_{+,1} (\lambda_0,0,x_0)}  \no \\[1mm]
&\qquad -\f{\sin (\alpha ) \dot{f'}_{\!\!\!+,1} (\lambda,0,x_0)+\cos (\alpha )\dot{f}_{+,1} (\lambda , 0,x_0)}
{\sin (\alpha ) f_{+,1} ' (\lambda , 0,x_0) + \cos (\alpha ) f_{+,1} (\lambda,0,x_0)} \no \\[1mm]
& \qquad \;\; + 2^{-1} \bigg(\int_{x_0}^{\infty}  dx \,
\big\{ [q(x) - \lambda]^{-1/2} - [q(x) - \lambda_0]^{-1/2}\big\}\bigg),     \lb{3.89}
\end{align}
and hence \eqref{3.65} for $z = \lambda < 0$, $z_0 = \lambda_0 < 0$, both sufficiently negative.
In this context one observes that for $x_0 > 0$ sufficiently large,
\begin{align}
\begin{split}
& 2^{-1} \bigg(\int_{x_0}^R dx \,
\big\{ [q(x) - \lambda]^{-1/2} - [q(x) - \lambda_0]^{-1/2}\big\}\bigg)     \\
& \quad \underset{R \to \infty}{=}  \f{1}{4} (\lambda - \lambda_0) \bigg(\int_{x_0}^R dx \,
q(x) ^{-3/2}\bigg) [1 +\oh(1)]
\end{split}
\end{align}
with $q^{-3/2} \in L^1([x_0,\infty); dx)$ by Hypothesis \eqref{3.52}.

Analytic continuation in $\lambda$ of both sides in \eqref{3.89} extends the latter to $z \in \rho(H_{+,\alpha })$.
Similarly, analytic continuation in $\lambda_0$ of both sides in \eqref{3.89} extends the latter to
$z_0 \in \rho(H_{+,\alpha })$, completing the proof of \eqref{3.65}.

Relation \eqref{3.66} then follows from integrating \eqref{3.65} with respect to the energy variable  from $z_0$ to $z$.
\end{proof}
%%%%%%%

%%%%%%%
\begin{remark} \lb{r3.4a} 
Employing the resolvent equation,
\begin{align}
& (H_{+,0} - z I_+)^{-1} - (H_{+,0} - z_0 I_+)^{-1} = (z - z_0)
(H_{+,0} - z I_+)^{-1} (H_{+,0} - z_0 I_+)^{-1},     \no \\
& \hspace*{8.5cm}  z, z_0 \in \rho(H_{+,0}),
\end{align}
an alternative proof of relation \eqref{3.60} follows upon establishing
\begin{equation}
(H_{+,0} - z I_+)^{-1} \in \cB_2\big(L^2(\bbR_+; dx)\big), \quad  z \in \rho(H_{+,0}).  \lb{3.67}
\end{equation}
To prove \eqref{3.67} in turn it suffices to establish the Hilbert--Schmidt property for some
$z=\lambda < 0$ sufficiently negative, followed by analytic continuation with respect to $z \in \rho(H_{+,0})$. 
Given the Green's function of $H_{+,0}$ in \eqref{3.57},
it thus suffices to prove that
\begin{equation}
\int_{\bbR_+} \int_{\bbR_+}  dx \, dx' \, |\phi_{0}(\lambda,x) \, \psi_{+,0} (\lambda,x')|^2 < \infty; 
\end{equation}
we omit further details at this point.
\end{remark}
%%%%%%%

Next, we apply Theorem \ref{t3.3} to the following explicitly solvable example concerning the linear potential
and denote by $Ai(\, \cdot \,), Bi(\, \cdot \,)$ the Airy functions as discussed, for instance, in
\cite[Sect.~10.4]{AS72}.

%%%%%%%
\begin{example} \lb{e3.4}
Consider the special case $q(x) = x$, $x \in \bbR_+$, and $\alpha = 0$. Then,
for $x \in \bbR_+$, $z, z_0 \in \rho(H_{+,0})$,
\begin{align}
& f_{+,1} (z,x,x_0) = (2 \pi)^{1/2} e^{(2/3) (x_0 - z)^{3/2}} Ai(x-z),    \\
& f_{+,2} (z,x,x_0) = (\pi /2)^{1/2} e^{- (2/3) (x_0 - z)^{3/2}} Bi(x-z),    \\
& W(f_{+,1} (z,\, \cdot \,,x_0), f_{+,2} (z,\, \cdot \,,x_0)) =1,   \\
& \phi_0(z,x) = \pi [Ai(-z) Bi(x-z) - Bi(-z) Ai(x-z)],    \\
& \psi_{+,0} (z,x) = Ai(x-z)/Ai(-z),    \\
& W(\phi_0(z,\, \cdot \,), \dot{\psi}_{+,0} (z,\, \cdot \,))(x)     \\
& \quad = \pi [Ai'(x-z) Bi'(x-z) - (x-z) Ai(x-z) Bi(x-z)] - [Ai'(-z)/Ai(-z)],    \no \\
\begin{split}
& \cI(z,z_0,x_0) = \int_{x_0}^{\infty} dx \big\{[x - z]^{-1/2} - [x - z_0]^{-1/2}\big\}   \\[1mm]
& \hspace*{1.65cm} = 2 \big[(x_0 - z_0)^{1/2} - (x_0 - z)^{1/2}\big],
\end{split} \\
\begin{split}
& \tr_{L^2(\bbR_+;dx)} \big((H_{+,0} - z I_+)^{-1} - (H_{+,0} - z_0 I_+)^{-1}\big)     \\
& \quad = \psi_{+,0}'(z,0) - \psi_{+,0}'(z_0,0) = [Ai'(-z)/Ai(-z)] - [Ai'(-z_0)/Ai(-z_0)],
\end{split} \\
\begin{split}
& \det{_{2,L^2(\bbR_+;dx)}} \big(I_+ - (z - z_0) (H_{+,0} - z_0 I_+)^{-1}\big)    \\
& \quad = [Ai(-z)/Ai(-z_0)] \exp\big((z-z_0) [Ai'(-z_0)/Ai(-z_0)]\big).    \lb{3.99}
\end{split}
\end{align}
\end{example}
%%%%%%%

We note that \eqref{3.99} was recently considered in \cite{Me16}, but the exponential factor
in \eqref{3.99} was missed in \cite{Me16}.

Finally, we generalize Theorem \ref{t3.3} to the following setting.

%%%%%%%%
\begin{theorem} \lb{t3.5}
Assume Hypothesis \ref{h3.1}, $z\in \rho(H_{+,\alpha_2})$, $z_0 \in \rho (H_{+, \alpha_1})$, and $\alpha_1,\alpha_2 \in [0,\pi)$. Then,
\begin{equation}
\big[(H_{+,\alpha_2} -z I_+)^{-1} - (H_{+,\alpha_1} -z_0 I_+)^{-1}\big] \in \cB_1\big(L^2(\bbR_+; dx)\big),     \lb{3.74a}
\end{equation}
and $($cf.\ \eqref{3.66a}$)$
\begin{align}
& {\tr}_{L^2(\bbR_+; dx)} \big((H_{+,\alpha_2} - z I_+)^{-1}
- (H_{+,\alpha_1} - z_0 I_+)^{-1}\big)    \no \\
& \quad = - \f{d}{dz} \ln\bigg(\f{\sin(\alpha_2) f_{+,1}'(z,0,x_0)
+ \cos(\alpha_2) f_{+,1}(z ,0,x_0)}{\sin(\alpha_1) f_{+,1}'(z_0 ,0,x_0)
+ \cos(\alpha_1) f_{+,1}(z_0 ,0,x_0)}\bigg),    \lb{3.75} \\
& \qquad +\f{1}{2} \cI(z,z_0,x_0) .     \no
\end{align}
\end{theorem}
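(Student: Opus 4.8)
The plan is to derive Theorem \ref{t3.5} from Theorem \ref{t3.3} together with the elementary observation that $H_{+,\alpha_1}$ and $H_{+,\alpha_2}$ differ by a rank-one resolvent perturbation. For the trace class claim \eqref{3.74a}, pick an auxiliary point $z_1 \in \rho(H_{+,\alpha_1}) \cap \rho(H_{+,\alpha_2})$ (nonempty, since both operators have purely discrete spectrum) and decompose
\begin{align}
& (H_{+,\alpha_2} - z I_+)^{-1} - (H_{+,\alpha_1} - z_0 I_+)^{-1}     \no \\
& \quad = \big[(H_{+,\alpha_2} - z I_+)^{-1} - (H_{+,\alpha_2} - z_1 I_+)^{-1}\big]
+ \big[(H_{+,\alpha_2} - z_1 I_+)^{-1} - (H_{+,\alpha_1} - z_1 I_+)^{-1}\big]     \no \\
& \qquad + \big[(H_{+,\alpha_1} - z_1 I_+)^{-1} - (H_{+,\alpha_1} - z_0 I_+)^{-1}\big].     \no
\end{align}
The first and third brackets lie in $\cB_1\big(L^2(\bbR_+; dx)\big)$ by \eqref{3.60} (applied with boundary parameter $\alpha_2$, respectively $\alpha_1$), while the middle bracket has rank at most one: $H_{+,\alpha_1}$ and $H_{+,\alpha_2}$ are self-adjoint extensions of the symmetric operator $H_{+,min}$ with deficiency indices $(1,1)$, and indeed \eqref{3.57}, \eqref{3.59}, \eqref{3.59a} exhibit the difference of the two Green's functions at the common point $z_1$ in the form $c(z_1)\, f_{+,1}(z_1,x,x_0)\, f_{+,1}(z_1,x',x_0)$ with $f_{+,1}(z_1,\cdot,x_0) \in L^2(\bbR_+; dx)$. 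Summing the three contributions yields \eqref{3.74a}.

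For the trace identity I would essentially rerun the proof of Theorem \ref{t3.3}, carrying the data $(z,\alpha_2)$ in place of $(z,\alpha)$ and $(z_0,\alpha_1)$ in place of $(z_0,\alpha)$. Take $z,z_0$ real and sufficiently negative, say $z_0 < z < 0$ and both below $\inf(\sigma(H_{+,\alpha_1})) \wedge \inf(\sigma(H_{+,\alpha_2}))$; the general case follows at the end by analytic continuation, first in $z$ over $\rho(H_{+,\alpha_2})$ and then in $z_0$ over $\rho(H_{+,\alpha_1})$. The one adaptation relative to Theorem \ref{t3.3} is that the resolvent difference is no longer sign-definite, so before invoking \eqref{3.34} one splits it, via the decomposition above with $z_1 = z_0$, into the nonnegative operator $(H_{+,\alpha_2} - z I_+)^{-1} - (H_{+,\alpha_2} - z_0 I_+)^{-1}$ (nonnegativity by monotonicity of resolvents, cf.\ \eqref{3.66b}) plus the rank-one operator $(H_{+,\alpha_2} - z_0 I_+)^{-1} - (H_{+,\alpha_1} - z_0 I_+)^{-1}$, whose continuous, real-symmetric kernel has diagonal integral equal to its trace; the two diagonal integrals telescope. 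Combined with Lemma \ref{l3.2} (with $R_n = T_n = P_R$, $q = 1$) and \eqref{3.57}, the trace equals $\lim_{R \to \infty} \int_0^R dx \, \big[\phi_{\alpha_2}(z,x)\psi_{+,\alpha_2}(z,x) - \phi_{\alpha_1}(z_0,x)\psi_{+,\alpha_1}(z_0,x)\big]$. Applying the Wronskian identity $\int_0^R dx\, \phi_\alpha(w,x)\psi_{+,\alpha}(w,x) = W\big(\phi_\alpha(w,\cdot),\dot{\psi}_{+,\alpha}(w,\cdot)\big)(0) - W\big(\phi_\alpha(w,\cdot),\dot{\psi}_{+,\alpha}(w,\cdot)\big)(R)$ to each summand separately --- with $(w,\alpha) = (z,\alpha_2)$, respectively $(z_0,\alpha_1)$ --- the two endpoint terms at $x = 0$ both vanish, because for each $\alpha$ one has $\sin(\alpha)\psi_{+,\alpha}'(w,0) + \cos(\alpha)\psi_{+,\alpha}(w,0) \equiv 1$, hence $W\big(\phi_\alpha(w,\cdot),\dot{\psi}_{+,\alpha}(w,\cdot)\big)(0) = 0$. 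Thus the trace equals $\lim_{R\to\infty}\big[W\big(\phi_{\alpha_1}(z_0,\cdot),\dot{\psi}_{+,\alpha_1}(z_0,\cdot)\big)(R) - W\big(\phi_{\alpha_2}(z,\cdot),\dot{\psi}_{+,\alpha_2}(z,\cdot)\big)(R)\big]$.

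Now insert the asymptotics \eqref{3.86a}, which was derived in the proof of Theorem \ref{t3.3} for an arbitrary boundary parameter and hence applies with parameter $\alpha_2$ evaluated at $z$ and parameter $\alpha_1$ evaluated at $z_0$. Subtracting, the two divergent Airy-type contributions $2^{-1}\int_{x_0}^R dx\,[q(x) - z]^{-1/2}$ and $2^{-1}\int_{x_0}^R dx\,[q(x) - z_0]^{-1/2}$ combine, as $R \to \infty$, into the convergent integral $\f{1}{2}\cI(z,z_0,x_0)$ (finite by \eqref{3.52}; cf.\ \eqref{3.66a}), and one arrives at
\begin{align}
& \tr_{L^2(\bbR_+; dx)}\big((H_{+,\alpha_2} - z I_+)^{-1} - (H_{+,\alpha_1} - z_0 I_+)^{-1}\big)     \no \\
& \quad = \f{d}{dz}\ln\big(\sin(\alpha_1) f_{+,1}'(z,0,x_0) + \cos(\alpha_1) f_{+,1}(z,0,x_0)\big)\bigg|_{z = z_0}     \no \\
& \qquad - \f{d}{dz}\ln\big(\sin(\alpha_2) f_{+,1}'(z,0,x_0) + \cos(\alpha_2) f_{+,1}(z,0,x_0)\big) + \f{1}{2}\cI(z,z_0,x_0)     \no
\end{align}
for $z,z_0$ real and sufficiently negative, which is the right-hand side of \eqref{3.75}. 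The advertised analytic continuation then finishes the proof.

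I expect no essential obstacle beyond the one already met in Theorem \ref{t3.3}, namely the justification (via \eqref{3.80}--\eqref{3.85}) that the leading $x \to \infty$ behavior of $\dot{f}_{+,1}$ and of its spatial derivative is correctly captured by formally differentiating \eqref{3.55} in the spectral parameter. The only genuinely new ingredients are the rank-one splitting used to establish \eqref{3.74a} and to identify the trace with a diagonal integral, and the bookkeeping of the two independent reference data $(z,\alpha_2)$ and $(z_0,\alpha_1)$ --- which is harmless precisely because the $x = 0$ boundary term in the Wronskian identity drops out for each of them separately.
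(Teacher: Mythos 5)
Your proposal is correct and follows essentially the same route as the paper: the paper likewise reduces \eqref{3.74a} to the argument of Theorem \ref{t3.3} (the resolvents for different boundary parameters differing by a rank-one operator), identifies the trace with $\lim_{R \to \infty}\big[W\big(\phi_{\alpha_1}(z_0,\,\cdot\,),\dot\psi_{+,\alpha_1}(z_0,\,\cdot\,)\big)(R) - W\big(\phi_{\alpha_2}(z,\,\cdot\,),\dot\psi_{+,\alpha_2}(z,\,\cdot\,)\big)(R)\big]$, and concludes via \eqref{3.86a}. Your explicit rank-one splitting to restore sign-definiteness before invoking \eqref{3.34} fills in a detail the paper leaves implicit, and your final expression agrees with the intended reading of \eqref{3.75}.
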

%%%%%%%%
\begin{proof}
Equation \eqref{3.74a} is established exactly as in the proof of Theorem \ref{t3.3}. Furthermore, as argued there one has
\begin{align}
& \tr_{L^2(\bbR_+;dx)} \big((H_{+,\alpha_2 } - \lambda I_+)^{-1} - (H_{+,\alpha_1 } - \lambda_0 I_+)^{-1}\big)    \no \\
& \quad = \lim_{R \to \infty}
\big[W\big(\phi_{\alpha_1} (\lambda_0, \, \cdot \,), \dot \psi_{+,\alpha_1 } (\lambda_0, \, \cdot \,) \big)(R)
- W\big(\phi_{\alpha_2} (\lambda, \, \cdot \,), \dot \psi_{+,\alpha_2 } (\lambda, \, \cdot \,) \big)(R) \big] .
\end{align}
Using equation \eqref{3.86a} then immediately implies \eqref{3.75}.
\end{proof}
%%%%%%%%

Setting $z=z_0$, we obtain in particular
\begin{align}
& {\tr}_{L^2(\bbR_+; dx)} \big((H_{+,\alpha_2} - z I_+)^{-1}
- (H_{+,\alpha_1} - z I_+)^{-1}\big)    \no \\
& \quad = - \f{d}{dz} \ln\bigg(\f{\sin(\alpha_2) f_{+,1}'(z,0,x_0)
+ \cos(\alpha_2) f_{+,1}(z ,0,x_0)}{\sin(\alpha_1) f_{+,1}'(z ,0,x_0)
+ \cos(\alpha_1) f_{+,1}(z ,0,x_0)}\bigg) .    \lb{3.76}
\end{align}

%%%%%%%%
\begin{remark}
In order to proof Theorem \ref{t3.5}, one could instead have proven the slightly simpler result \eqref{3.76}
and then note that
\begin{align}
& {\tr}_{L^2(\bbR_+; dx)} \big((H_{+,\alpha_2} - z I_+)^{-1}
- (H_{+,\alpha_1} - z_0 I_+)^{-1}\big)    \no \\
& = {\tr}_{L^2(\bbR_+; dx)} \big((H_{+,\alpha_2} - z I_+)^{-1}
- (H_{+,\alpha_1} - z I_+)^{-1}\big)    \no \\
& \quad + {\tr}_{L^2(\bbR_+; dx)} \big((H_{+,\alpha_1} - z I_+)^{-1}
- (H_{+,\alpha_1} - z_0 I_+)^{-1}\big),
\end{align}
which, using \eqref{3.76} together with Theorem \ref{t3.3}, implies Theorem \ref{t3.5}.
\hfill $\diamond$
\end{remark}
%%%%%%%%

%%%%%%%%%%%%%%%%%%%%%%%%%
\medskip
\noindent {\bf Acknowledgments.} We are indebted to the referee for a very careful reading of our manuscript. 
%%%%%%%%%%%%%%%%%%%%%%%%%

%%%%%%%%%%%%%%%%%%%%%%%%%%%%
%%%%%%%%%%%%%%%%%%%%%%%%%%%%

\end{document}